  \DeclareMathOperator\Inv{Inv}
\renewcommand{\ge}{\geqslant}
\theoremstyle{definition}
\theoremstyle{plain}
\newtheorem{thm}{Theorem}
\newtheorem{lem}[thm]{Lemma}
\begin{document}

\title{The Size of Generating Sets of Powers}
\author{Dmitriy Zhuk}
\date{}

\maketitle









\begin{abstract}
In the paper we prove for every finite algebra $\mathbb A$ that 
either it has the polynomially generated powers (PGP) property, or it has the exponentially generated powers (EGP) property.
For idempotent algebras we give a simple criteria for the algebra to satisfy EGP property.
\end{abstract}

\section{Main Results}

We say that an algebra $\mathbb A$ has the polynomially generated powers (PGP) property if its
$n$-th power $\mathbb A^{n}$ has a polynomial-size generating set.
That is, there exists a polynomial $p$ such that for every $n$ 
the $n$-th power $\mathbb A^{n}$ can be generated by at most $p(n)$ tuples.
We say that an algebra $\mathbb A$ has the exponentially generated powers (EGP) property if 
there exists $b>1$ and $C>0$ such that 
for every $n$ the $n$-th power $\mathbb A^{n}$ cannot be generated by less than 
$C b^{n}$ tuples.

\begin{thm}\label{firstandmaintheorem}

Suppose $\mathbb A$ is a finite algebra.
Then either $\mathbb A$ has PGP property, or 
$\mathbb A$ has EGP property.

\end{thm}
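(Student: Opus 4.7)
The plan is to establish a strict dichotomy by isolating a term condition $(\Phi)$ on the clone of $\mathbb{A}$ whose presence is equivalent to PGP and whose failure forces EGP. Let $g(n)$ denote the minimum size of a generating set of $\mathbb{A}^{n}$; I want to show that $g(n)$ is either $O(n^{c})$ for some $c$, or bounded below by $Cb^{n}$ for some $b>1$, with no intermediate regime allowed.

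For the easy direction I would look for a Mal'tsev-style term condition in the spirit of the ``few subpowers'' and weak-near-unanimity tradition: heuristically, for each arity $k$ a $k$-ary term that acts as a projection on all but boundedly many coordinates of its inputs. Given such a $\Phi$, a polynomial-size generating set of $\mathbb{A}^{n}$ can be produced by taking tuples that differ from a fixed base point in at most a bounded number of positions, and verifying by iterated application of the $\Phi$-operations that every element of $\mathbb{A}^{n}$ is recovered. This upper bound step should be comparatively routine once the correct $\Phi$ is in hand.

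The hard direction is the converse: assuming $\Phi$ fails, construct $Cb^{n}$ tuples no one of which lies in the subalgebra generated by the others. I would extract, via $\Inv$/$\Pol$ duality, a compatible relation $R$ on $\mathbb{A}$ that witnesses the failure of $\Phi$, and then use $R$ block-by-block inside $\mathbb{A}^{n}$ to build obstructions. The main difficulty is ruling out intermediate growth such as $n^{\log n}$ or $2^{\sqrt{n}}$: the failure of $\Phi$ must supply a single uniform exponential rate $b>1$, not merely super-polynomial growth. I expect this to require a pigeonhole or compactness argument over the finitely many local patterns in $\mathbb{A}$, combined with a product construction that multiplies the number of incompressible tuples at a constant rate per coordinate block.

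For the idempotent refinement promised in the second assertion, idempotency forces every term to act as a projection on any diagonal, so the failure of $\Phi$ collapses to the non-existence of a specific low-arity term identity. This should yield the advertised simple, purely term-theoretic criterion separating PGP from EGP in the idempotent case.
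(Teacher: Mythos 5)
Your proposal is a plan rather than a proof, and the two places where it stays schematic are exactly where the content of the theorem lives. First, you never identify the condition $\Phi$. The paper's dividing line is not a Mal'tsev-style term condition but \emph{switchability}: $\mathbb A$ is $r$-switchable if $\mathbb A^{n}$ is generated by the tuples having at most $r$ indices $i$ with $a_{i}\neq a_{i+1}$. This family (tuples made of few constant blocks, not Hamming balls around a base point) is chosen precisely so that its failure to generate can be converted into invariant relations: taking a non-generated tuple with the \emph{minimal} number of switches and identifying variables along its blocks yields, for every $r$, a non-full relation in $\Inv(\mathbb A)$ of arity $m\ge r$ that contains every tuple with two equal adjacent entries (a ``nice'' relation). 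Your Hamming-ball family admits no analogous minimal-counterexample argument, and in any case you give no reason why \emph{some} $\Phi$ with both required directions exists --- which is the whole theorem.

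Second, your lower-bound strategy is logically insufficient as stated: exhibiting $Cb^{n}$ tuples none of which lies in the subalgebra generated by the others does not bound the size of a generating set from below (large independent sets coexist with tiny generating sets already in groups). The paper instead runs a covering/counting argument. From the nice relation it builds a non-full $\sigma\in\Inv(\mathbb A)$ of arity $2n+k$ containing every tuple with $c_{i}=d_{j}$ for some $i,j\le n$; this uses a pigeonhole step, picking the most popular excluded pair $(a,b)$ among consecutive coordinates of a tuple outside the nice relation and identifying variables so that $(a,\dots,a,b,\dots,b,0,\dots,k-1)\notin\sigma$. There are $(2n)!$ copies of $\sigma$ obtained by permuting its first $2n$ variables, each non-full, so any generating set of $\mathbb A^{2n+k}$ must contain a tuple outside each copy; the equality pattern of a fixed tuple forces it to lie outside at most $n!\cdot n!\cdot 2^{k}$ of the copies, giving $s\ge (2n)!/(n!\cdot n!\cdot 2^{k})>2^{n-k}$ and hence the uniform rate $b=\sqrt 2$. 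This single counting step is what rules out the intermediate growth you worry about, and nothing in your outline supplies it.
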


Suppose $\alpha,\beta\subsetneq A$, $\alpha\cup \beta = A$. 
We say that an operation $f\colon A^{n}\to A$ is $\alpha\beta$-projective if
there exists $j\in\{1,2,\ldots,n\}$ such that 
for every $(a_1,\ldots,a_n)\in A^{n}$ and $S\in\{\alpha,\beta\}$ we have 
$f(a_1,\ldots,a_{j-1},S,a_{j+1},\ldots,a_n)\subseteq S$.

\begin{thm}\label{IdempotentCriteria}
Suppose $\mathbb A$ is a finite idempotent algebra.
Then $\mathbb A$ has EGP property
if and only if there exist $\alpha$ and $\beta$ such that 
every operation of the algebra is $\alpha\beta$-projective.
\end{thm}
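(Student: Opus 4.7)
The plan is to prove the two directions of the biconditional separately, with the sufficiency being a direct lower bound construction and the necessity relying on Theorem \ref{firstandmaintheorem}.

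For sufficiency (existence of such $\alpha,\beta$ implies EGP), I first show by induction on term complexity that every term operation of $\mathbb{A}$ is itself $\alpha\beta$-projective: projections are trivially so, and substituting an $\alpha\beta$-projective term $g$ (projective at index $j_g$) into some coordinate $c$ of an $\alpha\beta$-projective term $f$ (projective at $j_f$) again yields an $\alpha\beta$-projective term, the new projective index being the natural shift of $j_f$ when $j_f \neq c$ and the shift of $j_g$ into the inserted block when $j_f = c$. Having this, I fix witnesses $a \in \alpha \setminus \beta$ and $b \in \beta \setminus \alpha$, both nonempty because $\alpha,\beta$ are proper subsets whose union is $A$, and I consider the $2^{n}$ tuples in $\{a,b\}^{n} \subseteq A^{n}$. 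For any generating set $G$ of $\mathbb{A}^{n}$ and any such $t = f(g^{(1)},\ldots,g^{(k)})$ with $g^{(\ell)} \in G$ and $f$ projective at coordinate $j$, the contrapositive of $\alpha\beta$-projectivity forces $g^{(j)}_{i} \in \alpha \setminus \beta$ exactly where $t_{i} = a$ and $g^{(j)}_{i} \in \beta \setminus \alpha$ exactly where $t_{i} = b$. Distinct $t \in \{a,b\}^{n}$ therefore yield distinct one-sided patterns on the corresponding $g^{(j)}$, so the induced map $t \mapsto g^{(j)}$ is injective into $G$, whence $|G| \geq 2^{n}$ and $\mathbb{A}$ has EGP.

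For necessity, I argue the contrapositive: if no pair $\alpha,\beta$ witnesses the criterion, then $\mathbb{A}$ has PGP, which by Theorem \ref{firstandmaintheorem} rules out EGP. The hypothesis provides, for every proper pair $\alpha,\beta$ with $\alpha \cup \beta = A$, some term operation of $\mathbb{A}$ that fails to be $\alpha\beta$-projective at every coordinate. The natural strategy is to convert this pointwise richness into a single algebraic witness of polynomial growth, namely a cube-like term of the kind known to characterize PGP for finite idempotent algebras. Such a term can then be used to construct, for each $n$, a polynomial-size generating family of $\mathbb{A}^{n}$: the generators encode the "cube faces" that the term merges into arbitrary elements. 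I expect the main obstacle to lie in the combinatorial step of producing a single cube-like term from the family of pair-specific anti-witnesses, and this will likely proceed by induction on $|A|$ or on the congruence/subalgebra structure of $\mathbb{A}$, using the failure of $\alpha\beta$-projectivity for successive candidate pairs to incrementally strengthen the available terms until a term witnessing PGP is obtained.
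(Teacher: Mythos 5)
Your sufficiency direction (a pair $\alpha,\beta$ with every basic operation $\alpha\beta$-projective implies EGP) is correct, and it is a genuinely different and more elementary route than the paper's. The closure of $\alpha\beta$-projectivity under composition does hold with the index bookkeeping you describe, and the counting step is sound: for $t\in\{a,b\}^{n}$ with $a\in\alpha\setminus\beta$, $b\in\beta\setminus\alpha$, writing $t=f(g^{(1)},\ldots,g^{(k)})$ with $f$ projective at $j$ forces $g^{(j)}_{i}\in\alpha\setminus\beta$ where $t_{i}=a$ and $g^{(j)}_{i}\in\beta\setminus\alpha$ where $t_{i}=b$, and since these two sets are disjoint the map $t\mapsto g^{(j)}$ is injective, giving $|G|\ge 2^{n}$. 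The paper instead observes (Lemma~\ref{IdempotentCriteriaL2}) that $\alpha\beta$-projectivity of all operations is equivalent to the invariance of the relations $\sigma_{n}=\rho(x_1,x_2)\vee\dots\vee\rho(x_{2n-1},x_{2n})$ with $\rho=(\alpha\times\alpha)\cup(\beta\times\beta)$, and then feeds $D_{A,n}\subseteq\sigma_{n}\neq A^{2n}$ into Theorem~\ref{trivialCriteria}. Your argument buys a cleaner and sharper bound ($2^{n}$ generators for $\mathbb A^{n}$ versus the paper's $2^{m/2-2k}$); the paper's formulation buys the relational reformulation that it also needs for the converse.

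The necessity direction, however, is a genuine gap. You state a strategy (``convert this pointwise richness into a single cube-like term \ldots I expect the main obstacle to lie in \ldots this will likely proceed by induction on $|A|$'') but do not carry out any of it: no construction of the term, no proof that such a term yields polynomial-size generating sets, and no argument that the failure of $\alpha\beta$-projectivity for every pair actually produces the needed operations. This is precisely the hard half of the theorem, and the appeal to ``cube-like terms known to characterize PGP'' imports an external characterization that is neither proved nor matched to the present definitions (the known cube-term results concern few subpowers, and relating them to PGP as defined here would itself require proof). The paper's actual argument goes in the opposite logical direction and is entirely relational: starting from EGP it uses Theorem~\ref{trivialCriteria} to get $\langle D_{A,m}\rangle_{\mathbb A}\neq A^{2m}$, applies a maximality argument (Lemma~\ref{findBlocker}) inside $\mathbb A\times\mathbb A$ to extract a reflexive binary $\rho$ whose disjunction powers $\sigma_{n}$ are invariant (Lemma~\ref{idempotent1step}), and then two further maximality/transitivity arguments (Lemma~\ref{IdempotentCriteriaL1}) to force $\rho$ into the form $(\alpha\times\alpha)\cup(\beta\times\beta)$ with exactly two classes, after which Lemma~\ref{IdempotentCriteriaL2} translates invariance of $\sigma_{n}$ into $\alpha\beta$-projectivity. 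None of these steps, nor any substitute for them, appears in your proposal, so the ``only if'' half remains unproved.
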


For an algebra $\mathbb A$ and a subset $X\subset A^{n}$,
by $\langle X \rangle_{\mathbb A}$ we denote the subalgebra generated by $X$.
For an integer $m$ put 
$D_{A,m} = 
\{(a_1,\ldots,a_{2m}) \mid
(a_1=a_2)\vee (a_3=a_4)\vee\dots\vee (a_{2m-1} = a_{2m})\}.$

\begin{thm}\label{trivialCriteria}
Suppose $\mathbb A$ is a finite algebra.
Then $\mathbb A$ has EGP property
if and only if 
$\langle D_{A,m}\rangle_{\mathbb A}\neq A^{2m}$
for every $m\ge |A|$.
\end{thm}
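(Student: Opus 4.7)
The plan is to prove both directions of the equivalence separately; Direction $(\Leftarrow)$ is a direct counting lower bound and Direction $(\Rightarrow)$ reduces to Theorem~\ref{IdempotentCriteria} in the idempotent case.

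\textbf{Direction $(\Leftarrow)$:} Assume $\langle D_{A,m}\rangle_{\mathbb{A}}\neq A^{2m}$ for every $m\geq |A|$. Let $G\subseteq A^n$ generate $\mathbb{A}^n$ with $n=2m$ and $m\geq |A|$. The projection of $G$ onto any $2m$ coordinates generates $A^{2m}$ and hence cannot lie inside $\langle D_{A,m}\rangle\subsetneq A^{2m}$; in particular some projected tuple falls outside $D_{A,m}$. Specialising the coordinates to an arbitrary perfect matching $M$ of $[n]$, we conclude: for every perfect matching $M$ of $[n]$ some $g\in G$ separates every edge of $M$, that is, $g_i\neq g_j$ for every $\{i,j\}\in M$. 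Since each $g\in A^n$ partitions $[n]$ into at most $|A|$ level sets, a standard counting bound on perfect matchings in complete multipartite graphs shows that a single $g$ separates at most a $((|A|-1)/|A|)^{n/2}\cdot\mathrm{poly}(n)$ fraction of the $(n-1)!!$ perfect matchings. Covering all of them forces $|G|\geq (|A|/(|A|-1))^{n/2}/\mathrm{poly}(n)$, exponential in $n$, giving EGP.

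\textbf{Direction $(\Rightarrow)$:} Assume $\mathbb{A}$ has EGP. For idempotent $\mathbb{A}$, Theorem~\ref{IdempotentCriteria} supplies $\alpha,\beta\subsetneq A$ with $\alpha\cup\beta=A$ such that every operation of $\mathbb{A}$ is $\alpha\beta$-projective. Define the $2m$-ary relation
\[
R_{\alpha,\beta}^{m}=\{(a_1,\ldots,a_{2m})\mid \exists\,i\in[m]:\ (a_{2i-1},a_{2i})\in\alpha^{2}\cup\beta^{2}\}.
\]
If $f$ is $\alpha\beta$-projective at coordinate $j$ and we apply $f$ to input tuples $a^{(1)},\ldots,a^{(k)}\in R_{\alpha,\beta}^{m}$, the witness pair of $a^{(j)}$ (both coordinates in $\alpha$, say) is transported via $\alpha\beta$-projectivity to the corresponding pair of the output, so $R_{\alpha,\beta}^{m}$ is invariant under $\mathbb{A}$. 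Since $\alpha\cup\beta=A$, every equal pair $(a,a)$ lies in $\alpha^{2}\cup\beta^{2}$, so $D_{A,m}\subseteq R_{\alpha,\beta}^{m}$. On the other hand $\alpha\setminus\beta$ and $\beta\setminus\alpha$ are both nonempty (as $\alpha,\beta\subsetneq A$ and $\alpha\cup\beta=A$ force $\alpha\neq\beta$), so any tuple all of whose $m$ pairs alternate between $\alpha\setminus\beta$ and $\beta\setminus\alpha$ lies outside $R_{\alpha,\beta}^{m}$. Hence $\langle D_{A,m}\rangle\subseteq R_{\alpha,\beta}^{m}\subsetneq A^{2m}$ for every $m\geq 1$.

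The main obstacle is extending Direction $(\Rightarrow)$ to a general (not necessarily idempotent) finite algebra. The plan is a reduction to the idempotent case via an appropriate idempotent reduct of $\mathbb{A}$, showing that both the EGP property and the condition $\langle D_{A,m}\rangle=A^{2m}$ transfer correctly under this reduction. The delicate point is that $\langle D_{A,m}\rangle_{\mathbb{A}}=A^{2m}$ in the full algebra may rely on non-idempotent operations absent from the reduct; one must argue that any such generation can be simulated by idempotent term operations, typically by substituting diagonal arguments into the non-idempotent terms.
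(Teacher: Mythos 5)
Your $(\Leftarrow)$ direction is essentially sound and in fact takes a more direct route than the paper: you count, for each tuple $g$, how many perfect matchings of $[2m]$ it can fully separate, whereas the paper recycles the machinery of Lemma~\ref{lem2} and the permutation count from Theorem~\ref{MainThm} (noting that $\langle D_{A,m}\rangle_{\mathbb A}$ is a non-full invariant relation containing every tuple with a coincidence in some paired position). Your bound of a $((|A|-1)/|A|)^{n/2}\cdot\mathrm{poly}(n)$ fraction is correct (the balanced partition is the worst case, and the computation via multinomial coefficients or the Kahn--Lov\'asz bound confirms it), but it is not so standard that it can be waved at; it needs a short proof.

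The $(\Rightarrow)$ direction, however, has a genuine gap, in two respects. First, invoking Theorem~\ref{IdempotentCriteria} is circular within this paper: the implication ``EGP $\Rightarrow$ $\alpha\beta$-projective'' is obtained from Lemmas~\ref{findBlocker}--\ref{IdempotentCriteriaL2}, and Lemma~\ref{idempotent1step} explicitly applies Theorem~\ref{trivialCriteria} (to $\mathbb A\times\mathbb A$) to get started; so Theorem~\ref{IdempotentCriteria} cannot be used as an ingredient here. Second, and more seriously, the non-idempotent case is not proved at all --- you state a plan (pass to an idempotent reduct and transfer both EGP and the generation condition), flag the delicate point yourself, and stop. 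That plan is not obviously executable: EGP for $\mathbb A$ does not readily transfer to its idempotent reduct, and this is precisely why the paper states the $\alpha\beta$-projectivity criterion only for idempotent algebras while Theorem~\ref{trivialCriteria} is general. The paper's actual argument avoids idempotency entirely: EGP excludes switchability (Lemma~\ref{swithabilityLemma}), so Lemma~\ref{lem1} yields nice invariant relations of unbounded arity; after an identification of two variables to make the arity even, such a relation $\sigma'$ satisfies $D_{A,m'/2}\subseteq\sigma'\subsetneq A^{m'}$, whence $\langle D_{A,m'/2}\rangle_{\mathbb A}\neq A^{m'}$, and Lemma~\ref{LtrivialCriteria} propagates the conclusion to every $m\ge|A|$. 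You would need to replace your sketch with an argument of this kind (or an independent proof of the forward direction of Theorem~\ref{IdempotentCriteria} plus a genuine reduction) for the proof to stand.
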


\section{Proof of Theorem 1 and Theorem 3}

For a tuple $(a_1,\ldots,a_n)$
we say that $i\in\{1,2,\ldots,n-1\}$
is a switch if $a_{i}\neq a_{i+1}$.
An algebra is called \emph{$r$-switchable}, if 
$\mathbb A^{n}$ is generated by all tuples of length $n$ with at most $r$ switches.
$\mathbb A$ is called \emph{switchable} if $\mathbb A$ is $r$-switchable for some $r$.
We assume that $A = \{0,1,\ldots,k-1\}$.
A relation $\rho\subset A^{m}$ is called \emph{nice} if 
$\rho$ is not full and for all $c_1,\ldots,c_m\in A$
$$(\exists i\colon c_{i} = c_{i+1}) \Rightarrow (c_{1},\ldots,c_{m})\in\rho.$$

By $\Inv(\mathbb A)$ we denote all invariant relations of an algebra $\mathbb A$.

It is easy to check the following lemma
\begin{lem}\label{swithabilityLemma}
Suppose a finite algebra $\mathbb A$ is switchable, then it has PGP property.
\end{lem}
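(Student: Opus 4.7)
The plan is to prove the lemma by a direct counting argument: show that the generating set prescribed by $r$-switchability has polynomially many elements.

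First I would fix $r$ with the property that $\mathbb{A}^{n}$ is generated by all tuples in $A^{n}$ with at most $r$ switches, and set $k=|A|$. Since $\mathbb{A}$ is assumed switchable, such an $r$ exists and depends only on $\mathbb{A}$, not on $n$. The task then reduces to giving a polynomial upper bound on $N(n,r)$, the number of tuples $(a_1,\ldots,a_n)\in A^{n}$ having at most $r$ switches.

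Next I would count $N(n,r)$ by stratifying according to the exact number of switches $s\in\{0,1,\ldots,r\}$. A tuple with exactly $s$ switches is determined by choosing the $s$ switch positions from $\{1,\ldots,n-1\}$, together with a value in $A$ for each of the $s+1$ resulting constant blocks, with the mild restriction that consecutive blocks carry distinct values. A crude upper bound (ignoring the distinct-values restriction) gives
$$
N(n,r) \;\le\; \sum_{s=0}^{r} \binom{n-1}{s} k^{\,s+1}.
$$
Each term $\binom{n-1}{s}k^{s+1}$ is a polynomial in $n$ of degree $s$, so the whole sum is bounded by a polynomial $p(n)$ of degree $r$ whose coefficients depend only on $k$ and $r$. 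Since $\mathbb{A}^{n}$ is generated by the set of at most $N(n,r)\le p(n)$ such tuples, the PGP property follows directly from the definition.

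There is essentially no serious obstacle: the hypothesis of switchability furnishes the generating set for free, and the remaining work is the elementary combinatorial estimate above. The only small point to keep in mind is that $r$ must be chosen once and for all (independent of $n$) so that the degree of the bounding polynomial does not depend on $n$; this is guaranteed by the definition of switchability, and is what makes the lemma genuinely "easy to check."
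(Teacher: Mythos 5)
Your proof is correct and is exactly the elementary counting argument the paper has in mind when it says the lemma is ``easy to check'' (the paper omits the proof entirely): the generating set of tuples with at most $r$ switches has size at most $\sum_{s=0}^{r}\binom{n-1}{s}k^{s+1}$, a polynomial in $n$ of degree $r$ with $r$ fixed by switchability.
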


\begin{lem}\label{lem1}
Suppose a finite algebra $\mathbb A$ is not switchable.
Then for every $r$ there exists a nice relation in 
$\Inv(\mathbb A)$ of arity $m\ge r$.
\end{lem}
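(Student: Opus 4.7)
I will prove the contrapositive: fixing $r$ and assuming $\Inv(\mathbb A)$ contains no nice relation of arity $\ge r$, I will show that $\mathbb A$ is switchable, contradicting the hypothesis of the lemma. The first step is a reformulation. For each $m$ let $\sigma_m := \{(c_1,\ldots,c_m) \in A^m \mid \exists i,\ c_i = c_{i+1}\}$; a nice relation of arity $m$ is exactly a proper invariant superset of $\sigma_m$, and such a superset exists if and only if $\langle \sigma_m \rangle_{\mathbb A} \ne A^m$. Consequently, my assumption is that $\langle \sigma_m \rangle_{\mathbb A} = A^m$ for every $m \ge r$.

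Set $s := r - 2$ and let $S_{n,s}$ denote the set of $n$-tuples with at most $s$ switches. I will prove, by induction on the number of switches $p$ of $t \in A^n$, that $t \in \langle S_{n,s} \rangle_{\mathbb A}$; this shows $\mathbb A$ is $s$-switchable. The base case $p \le s$ is trivial. For $p > s$ one has $p + 1 \ge r$: I write $t$ in run form $u_1^{l_1} u_2^{l_2} \cdots u_{p+1}^{l_{p+1}}$ with $u_i \ne u_{i+1}$, compress to $\bar t := (u_1, \ldots, u_{p+1}) \in A^{p+1}$, and use $\langle \sigma_{p+1} \rangle_{\mathbb A} = A^{p+1}$ to write $\bar t = f(w^{(1)}, \ldots, w^{(k)})$ for some term $f$ and tuples $w^{(i)} \in \sigma_{p+1}$. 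Since each $w^{(i)}$ has at least one non-switch, it has at most $p - 1$ switches. I then re-expand each $w^{(i)}$ back to length $n$ via the block lengths of $t$: $\tilde w^{(i)} := (w^{(i)}_1)^{l_1}(w^{(i)}_2)^{l_2}\cdots(w^{(i)}_{p+1})^{l_{p+1}}$. A coordinatewise verification gives $f(\tilde w^{(1)}, \ldots, \tilde w^{(k)}) = t$, and $\tilde w^{(i)}$ inherits the switch count of $w^{(i)}$, so has at most $p - 1$ switches. The induction hypothesis therefore places each $\tilde w^{(i)}$, and hence $t$, in $\langle S_{n,s} \rangle_{\mathbb A}$.

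The main obstacle I foresee is the qualitative gap between the two conditions in play: $\sigma_m$ insists on only one non-switch, while $S_{n,s}$ insists on at least $n - 1 - s$ non-switches. Compressing $t$ to dimension $p + 1$ equal to its own number of runs is what closes that gap: inside $A^{p+1}$ the weak availability ``one non-switch'' suffices, and when re-expanded inside $A^n$ it produces the strict decrement ``$p \to p - 1$ switches'' on the nose, which is exactly the monotone quantity driving the induction down to the threshold $s$.
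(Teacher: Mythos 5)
Your proposal is correct and is essentially the paper's own argument run in the contrapositive direction: your induction on the switch count, with run-compression to arity $p+1$ and re-expansion along the block lengths, is the same device as the paper's choice of a tuple outside $\langle H\rangle_{\mathbb A}$ with the minimal number of switches followed by identification of variables along its runs (a minimal counterexample being the induction in disguise). The only cosmetic differences are that you work with the canonical candidate $\langle\sigma_m\rangle_{\mathbb A}$ rather than a variable-identification of $\langle H\rangle_{\mathbb A}$, and that you should normalize $r\ge 3$ so that $s=r-2\ge 1$, which is harmless because the hypothesis of the contrapositive for $r$ implies it for all larger values.
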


\begin{proof}
Since $\mathbb A$ is not switchable, for every $r$ there exists $n$ such that 
$\mathbb A^{n}$ is not generated by all tuples with at most $r$ switches.
Let $H\subseteq A^{n}$ be the set of all tuples with at most $r$ switches.
Put $\sigma = \langle H\rangle_\mathbb A$. 
Let $\alpha$ be a tuple in $A^{n}\setminus \sigma$ with the minimal number of switches.
Suppose 
$$\alpha =(\underbrace{a_1,\ldots,a_1}_{n_1},\underbrace{a_2,\ldots,a_2}_{n_2},\ldots,\underbrace{a_{m},\ldots,a_m}_{n_m}),$$
where $a_{1}\neq a_2, a_{2}\neq a_3, \ldots,a_{m-1}\neq a_m$.
By the definition, $m-1>r$.
Put 
$$\rho(x_1,\ldots,x_{m}) = 
\sigma(\underbrace{x_1,\ldots,x_1}_{n_1},\underbrace{x_2,\ldots,x_2}_{n_2},\ldots,\underbrace{x_{m},\ldots,x_m}_{n_m}).$$
Since $\sigma$ contains all tuple with at most $m-2$ switches, $\rho$ is nice. This completes the proof.
\end{proof}

\begin{lem}\label{lem2}
Suppose $\mathbb A$ is a finite algebra, $\rho\in\Inv(\mathbb A)$ is a nice relation of arity $m$ such that $m> 2k^2 \cdot n^2$.
Then there exists a relation $\sigma\in \Inv(\mathbb A)$ of arity $2n+k$ such that 
$\sigma$ is not full 
and for all $c_{1},\ldots,c_{n},d_1,\ldots,d_n,e_1,\ldots,e_k\in A$ we have
$$(\exists i,j\in\{1,2,\ldots,n\}\colon c_{i} = d_{j}) \Rightarrow (c_1,\ldots,c_{n},d_{1},\ldots,d_{n},e_1,\ldots,e_k)\in\sigma.$$
\end{lem}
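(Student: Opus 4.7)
The plan is to realize $\sigma$ as a primitive-positive substitution into $\rho$. Concretely, I will construct a map
\[
f\colon\{1,\dots,m\}\to\{c_1,\dots,c_n,d_1,\dots,d_n,e_1,\dots,e_k\}
\]
and set $\sigma(c_1,\dots,c_n,d_1,\dots,d_n,e_1,\dots,e_k):=\rho(f(1),\dots,f(m))$; the resulting relation is automatically in $\Inv(\mathbb A)$. For the non-fullness witness I fix an arbitrary $(b_1,\dots,b_m)\in A^m\setminus\rho$; niceness of $\rho$ forces $b_p\ne b_{p+1}$ for every $p$, so the task reduces to designing $f$ consistently with this single witness.

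The core step is a pigeonhole argument. Among the $m-1>2k^2n^2-1$ consecutive pairs $(b_p,b_{p+1})$ there are only $k(k-1)\le k^2$ possibilities, so some fixed pattern $(\alpha,\beta)$ with $\alpha\ne\beta$ occurs at least $2n^2$ times; keeping every other occurrence produces indices $p_1<\dots<p_{n^2}$ with $p_{s+1}\ge p_s+2$ and $b_{p_s}=\alpha$, $b_{p_s+1}=\beta$. Then I fix any bijection $s\mapsto(i_s,j_s)$ between $\{1,\dots,n^2\}$ and $\{1,\dots,n\}^2$ and define
\[
f(p_s):=c_{i_s},\quad f(p_s+1):=d_{j_s}\ \ (s=1,\dots,n^2),\qquad f(p):=e_{b_p+1}\text{ for every remaining }p.
\]

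Two verifications finish the proof. First, if some values satisfy $c_i=d_j$, take $s$ with $(i_s,j_s)=(i,j)$; then positions $p_s$ and $p_s+1$ of the substituted tuple carry the same value, so by niceness of $\rho$ the substituted tuple lies in $\rho$, i.e.\ $(c_1,\dots,e_k)\in\sigma$. Second, assigning $c_i=\alpha$, $d_j=\beta$, $e_\ell=\ell-1$ (identifying $A$ with $\{0,\dots,k-1\}$) substitutes coordinate-wise to exactly $(b_1,\dots,b_m)\notin\rho$, so $\sigma$ is not full.

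The main obstacle is the consistency requirement: whenever $f$ identifies two coordinates of $\rho$, the corresponding entries of $b$ must agree. The pigeonhole step is precisely what delivers this, by forcing all $c$-positions to sit on entries equal to $\alpha$ and all $d$-positions on entries equal to $\beta$, while filler positions are consistent via $e_{b_p+1}$. The factor $2$ in $m>2k^2n^2$ is exactly the slack needed to then select $n^2$ of the pattern's occurrences that are pairwise non-overlapping, one per required pair $(c_i,d_j)$.
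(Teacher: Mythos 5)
Your proof is correct and takes essentially the same route as the paper's: fix a tuple outside the nice relation, use pigeonhole on consecutive pairs to find $n^2$ pairwise disjoint occurrences of a single pattern $(\alpha,\beta)$, identify those positions with the $c_i,d_j$ so that every pair $(c_i,d_j)$ is realized, and send every remaining position to a value-indexed variable $e_\ell$, with the original tuple witnessing non-fullness and niceness giving the implication. The only differences are bookkeeping: the paper pigeonholes over the disjoint pairs at odd positions and splits the identification into two stages ($\delta$, then $\sigma$), whereas you pigeonhole over all $m-1$ consecutive pairs, thin them to enforce disjointness, and perform the identification with a single map $f$.
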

\begin{proof}
Since $\rho$ is not full, 
there exists a tuple $(a_1,\ldots,a_m)\in A^{m}\setminus \rho.$
We consider the sequence of pairs 
$(a_1,a_2),(a_3,a_4),(a_5,a_6),\ldots$, 
where the last pair is $(a_{m-1},a_m)$ if $m$ is even and 
$(a_{m-2},a_{m-1})$ if $m$ is odd.
We choose the most popular pair in the sequence. 
Suppose this pair is $(a,b)$ and it appears $l \ge n^2$ times.
Then we identify variables in the relation $\rho$ as follows
$$\delta(x_1,y_1,x_2,y_2,\ldots,x_l,y_l,z_{0},\ldots,z_{k-1}) =
\rho(t_{1},\ldots,t_{m}),$$
where 
$t_{i} = x_{j}$ and 
$t_{i+1} = y_{j}$ if $(a_{i},a_{i+1})$ is the $j$-th pair in the sequence that is equal to $(a,b)$;
and $t_{i} = z_{a_{i}}$ otherwise.
We can easily see that 
$(a,b,a,b,a,b,\ldots,a,b,0,1,\ldots,k-1)\notin\delta$.
It remains to define a relation $\sigma$.
This time we identify variables as follows
$$\sigma(x_1,\ldots,x_{n},y_1,\ldots,y_n,z_{0},\ldots,z_{k-1}) 
= \delta(r_{1},s_1,r_2,s_2,\ldots,r_l,s_l,z_{0},\ldots,z_{k-1}),$$
where $r_{i}\in\{x_1,\ldots,x_{n}\}$ for every $i$,
$s_{i}\in\{y_1,\ldots,y_{n}\}$ for every $i$,
and the pair $(x_{i},y_{j})$ appears at least once among the pairs 
$(r_1,s_1),(r_2,s_2),\ldots,(r_l,s_l)$ for every $i$ and $j$.
Since $l\ge n^2$, we always can do this. 
We know that 
$(\underbrace{a,a,\ldots,a}_n,\underbrace{b,b,\ldots,b}_n,0,1,2,\ldots,k-1)\notin\sigma$, hence $\sigma$ satisfies the condition of the lemma.
\end{proof}

\begin{thm}\label{MainThm}
Suppose $\mathbb A$ is a finite algebra. Then
either $\mathbb A$ is switchable, or
$\mathbb A$ has EGP property.
\end{thm}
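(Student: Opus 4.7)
The direction ``switchable $\Rightarrow$ PGP'' is already Lemma~\ref{swithabilityLemma}, so the substantive task is to show that a non-switchable $\mathbb A$ satisfies EGP. The plan is to exploit the relations built in Lemma~\ref{lem2} to manufacture, for each large $n$, many distinct proper invariant relations on $A^{2n+k}$, every one of which a generating set of $\mathbb A^{2n+k}$ must escape.

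First, combine Lemmas~\ref{lem1} and~\ref{lem2}: non-switchability furnishes, for every $n$, a nice invariant relation of arity $m>2k^{2}n^{2}$, and Lemma~\ref{lem2} converts it into an invariant relation $\sigma_n\subseteq A^{2n+k}$ that is not full but contains every tuple $(c_1,\ldots,c_n,d_1,\ldots,d_n,e_1,\ldots,e_k)$ with $c_i=d_j$ for some $i,j$. Fix the explicit witness $v_n=(a,\ldots,a,b,\ldots,b,0,1,\ldots,k-1)\notin\sigma_n$ and set $N=2n+k$. For every permutation $\phi$ of $[N]$, let $\sigma_n^{\phi}=\{x\in A^N\mid (x_{\phi(1)},\ldots,x_{\phi(N)})\in\sigma_n\}$; this is still an invariant relation of $\mathbb A$ and it still omits the correspondingly re-indexed copy of $v_n$. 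Hence, if $G\subseteq A^N$ generates $\mathbb A^N$, then for each $\phi$ some $g\in G$ must satisfy $g\notin\sigma_n^{\phi}$, since otherwise $\langle G\rangle_{\mathbb A}\subseteq\sigma_n^{\phi}$ would fail to contain the re-indexed witness.

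The heart of the proof is then a counting estimate: for any fixed $g\in A^N$, bound the number $|\mathcal P(g)|$ of permutations $\phi$ with $g\notin\sigma_n^{\phi}$. By the defining property of $\sigma_n$, such a $\phi$ must route $g$ so that the set of values at $\phi(1),\ldots,\phi(n)$ is disjoint (as a subset of $A$) from the set of values at $\phi(n+1),\ldots,\phi(2n)$. Ranging over disjoint pairs $X,Y\subseteq A$ of possible value sets and invoking the Vandermonde-style inequality $\binom{s_X}{n}\binom{s_Y}{n}\le\binom{s_X+s_Y}{2n}\le\binom{N}{2n}$ should give a bound of the form $|\mathcal P(g)|\le C_k\binom{N}{k}(n!)^{2}k!$ for a constant $C_k$ depending only on $k=|A|$. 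Covering all $N!$ permutations by generators then forces $|G|\ge N!/|\mathcal P(g)|\ge\binom{2n}{n}/C_k$, which is at least $c^{N}$ for some $c>1$; a projection argument $\mathbb A^{N'}\twoheadrightarrow\mathbb A^{N}$ extends the bound to arbitrary $N'\ge N$, yielding EGP.

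The main obstacle will be the combinatorial estimate on $|\mathcal P(g)|$: one must rule out the possibility that a single cleverly chosen tuple $g$ simultaneously witnesses a large fraction of the re-indexed relations $\sigma_n^{\phi}$. The key leverage is precisely the structural fact extracted in Lemma~\ref{lem2}, namely that the complement of $\sigma_n$ is confined to tuples whose $c$- and $d$-blocks carry disjoint value sets; this disjointness is a rigid combinatorial condition, and the counting argument above is designed to quantify just how few simultaneous $\phi$ any one $g$ can satisfy.
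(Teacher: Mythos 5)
Your proposal is correct and follows essentially the same route as the paper: both arguments take the non-full invariant relation $\sigma$ of arity $2n+k$ from Lemmas~\ref{lem1} and~\ref{lem2}, form its permuted copies (the paper permutes only the first $2n$ coordinates, you permute all $2n+k$, which only changes the count by a factor $\binom{N}{k}k!$ that cancels), observe that a generating set must contain a tuple outside each copy, and then bound the number of copies a single tuple can avoid by noting that avoidance forces the value sets of the two $n$-blocks to be disjoint, yielding the same $\binom{2n}{n}/C_k\ge 2^{n-k}$-type lower bound and the same projection step for general arities.
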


\begin{proof}

Suppose $\mathbb A$ is not switchable. Then by Lemma~\ref{lem1} for every $r$ 
there exists a nice relation of arity $m'\ge r$ in $\Inv(\mathbb A)$.
Then by Lemma~\ref{lem2} 
for every $n$ there exists a relation 
$\sigma\in \Inv(\mathbb A)$ of arity $2n+k$ such that
$\sigma$ is not full
and for all $c_{1},\ldots,c_{n},d_1,\ldots,d_n,e_1,\ldots,e_k\in A$
$$(\exists i,j\in\{1,2,\ldots,n\}\colon c_{i} = d_{j}) \Rightarrow (c_1,\ldots,c_{n},d_{1},\ldots,d_{n},e_1,\ldots,e_k)\in\sigma.$$
Let us consider 
all relations that can be obtained from $\sigma$ by a permutation of the first $2n$ variables.
The family of all such relations we denote by $\Sigma$.
Obviously, it contains exactly $2n!$ relations.
Assume that 
$\mathbb A^{2n+k}$ is generated by tuples 
$\alpha_1,\ldots,\alpha_s$. 
Since all relations in $\Sigma$ are not full, for every relation $\sigma'\in\Sigma$ there exists 
$i\in\{1,2,\ldots,s\}$ such that $\alpha_{i}\notin\sigma'$.

Let us count how many relations from $\Sigma$ can omit a tuple $(a_1,\ldots,a_{2n+k})$.
Suppose $$\sigma'(t_1,\ldots,t_{2n},z_0,...,z_{k-1} ) = \sigma(x_1,\ldots,x_{n},y_1,\ldots,y_n,z_{0},\ldots,z_{k-1}),$$
where  $\{t_1,\ldots,t_{2n}\} = \{x_1,\ldots,x_n,y_1,\ldots,y_n\}$, 
and $\sigma'$ omits the tuple $(a_1,\ldots,a_{2n+k})$.
Obviously, if $a_i = a_j$ then either $\{t_{i},t_{j}\}\subseteq \{x_1,\ldots,x_n\}$, or 
$\{t_{i},t_{j}\}\subseteq \{y_1,\ldots,y_n\}$.
Therefore, to define a permutation of variables it is sufficient to define a mapping 
$A\to\{x,y\}$, and the order of $x$'s and $y$'s in $t_1,\ldots,t_{2n}$.
We conclude that a tuple can be omitted by at most $n!\cdot n!\cdot 2^k$ relations.
Hence $s\ge (2n!)/((n!)^{2}\cdot 2^{k})>2^{n-k}.$
This means that for every $m$ 
we need at least $2^{[(m-k)/2]-k}\ge 2^{m/2-2k}$ tuples to generate $\mathbb A^{m}$. 
It remains to put $b = \sqrt{2}$ and $C = 2^{-2k}$ in the definition of EGP property
\end{proof}

Theorem~\ref{firstandmaintheorem} follows from Lemma~\ref{swithabilityLemma} and Theorem~\ref{MainThm}.
\begin{lem}\label{LtrivialCriteria}
Suppose $\mathbb A$ is a finite algebra, $m\ge k$, $\langle D_{A,m}\rangle_{\mathbb A}= A^{2m},$
then $\langle D_{A,n}\rangle_{\mathbb A}= A^{2n}$ for every $n> m$.
\end{lem}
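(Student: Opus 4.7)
The plan is to argue by induction on $n$; the nontrivial content is the single step from $n$ to $n+1$ for $n \ge k$, so I describe how to deduce $\langle D_{A,m+1}\rangle_{\mathbb A} = A^{2m+2}$ from $\langle D_{A,m}\rangle_{\mathbb A} = A^{2m}$ under the assumption $m \ge k$. Fix an arbitrary tuple $\vec b = (b_1,\ldots,b_{2m+2}) \in A^{2m+2}$; the goal is to produce $\vec b$ as a term operation applied to tuples in $D_{A,m+1}$.

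For each pair index $i \in \{1,\ldots,m+1\}$, let $v_i \in A^{2m}$ be the tuple obtained from $\vec b$ by deleting the coordinates $2i-1$ and $2i$. By the hypothesis $\langle D_{A,m}\rangle_{\mathbb A} = A^{2m}$, write $v_i = t_i(u^{(i)}_1,\ldots,u^{(i)}_{s_i})$ with every $u^{(i)}_j \in D_{A,m}$. Extend each $u^{(i)}_j$ to a tuple $\tilde u^{(i)}_j \in A^{2m+2}$ by inserting an arbitrary pair $(c_j,d_j) \in A^2$ at positions $2i-1,2i$. The tuple $\tilde u^{(i)}_j$ still has an equal pair at some position $\ne i$, inherited from $u^{(i)}_j \in D_{A,m}$, so $\tilde u^{(i)}_j \in D_{A,m+1}$. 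Applying $t_i$ coordinatewise to these extended tuples yields an element of $\langle D_{A,m+1}\rangle_{\mathbb A}$ that agrees with $\vec b$ at every position outside $\{2i-1,2i\}$ and equals $\bigl(t_i(c_1,\ldots,c_{s_i}),\,t_i(d_1,\ldots,d_{s_i})\bigr)$ at positions $2i-1,2i$. Since the $c_j,d_j$ are free parameters, the construction at index $i$ succeeds exactly when both $b_{2i-1}$ and $b_{2i}$ lie in $\mathrm{Im}(t_i) \subseteq A$; and $\mathrm{Im}(t_i)$ automatically contains every coordinate value of $v_i$, namely the set $\{b_\ell : \ell \notin \{2i-1,2i\}\}$.

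Thus the construction at index $i$ works whenever both components of pair $i$ of $\vec b$ already appear somewhere outside that pair, and the one substantive step—the only place where $m \ge k$ is used—is to show that at least one $i$ has this property. The plan here is a pigeonhole argument by contradiction: if no $i$ works, then for each $i$ one can pick an element $u_i \in \{b_{2i-1},b_{2i}\}$ that appears at no position of $\vec b$ outside $\{2i-1,2i\}$. For $i \ne j$ these $u_i,u_j$ must differ, because $u_i$ is absent from pair $j$. Hence $u_1,\ldots,u_{m+1}$ are $m+1$ pairwise distinct elements of $A$, contradicting $|A| = k \le m$. I expect this pigeonhole to be the conceptual heart of the argument; the rest is bookkeeping, and notably no idempotency assumption is needed, since the equal-pair membership of $\tilde u^{(i)}_j$ is inherited entirely from $u^{(i)}_j \in D_{A,m}$.
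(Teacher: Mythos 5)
Your argument is correct (the construction at index $i$ does work, $\tilde u^{(i)}_j$ does stay in $D_{A,m+1}$, and the pigeonhole correctly uses $m\ge k=|A|$), but it takes a genuinely different route from the paper's. The paper works in one shot rather than by induction: given $\alpha=(a_1,\ldots,a_{2n})$, it permutes the pairs so that all of the at most $k\le m$ distinct values of $\alpha$ already occur among the first $2m$ coordinates, takes a representation $f(\gamma_1',\ldots,\gamma_s')=(a_1,\ldots,a_{2m})$ over $D_{A,m}$, and extends each generator by $2n-2m$ coordinates that are literal copies of earlier columns (for each new position choose $j\le 2m$ with $a_j$ equal to the desired value and copy the $j$-th coordinate); the extensions remain in $D_{A,n}$ because the witnessing equal pair is untouched. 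Your delete-and-reinsert step is the local version of the same mechanism: the claim that $\mathrm{Im}(t_i)$ contains every value occurring in $v_i$ is, unwound, exactly the column-duplication trick, and your pigeonhole (some pair both of whose entries recur elsewhere) plays the role of the paper's weaker observation that all values fit into $m$ of the pairs. The paper's version is shorter and dispenses with the induction; yours avoids the ``WLOG rearrange'' step (which the paper leaves implicit and which silently uses the invariance of $D_{A,n}$ under permutations of its pairs) and makes the use of $m\ge k$ more explicit. Both are valid, and you are right that idempotency is nowhere needed.
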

\begin{proof}
Suppose $\alpha = (a_1,\ldots,a_{2n})\in A^{2n}$.
Put $\alpha' = (a_1,\ldots,a_{2m})$.
Let us show that $\alpha\in \langle D_{A,n}\rangle_{\mathbb A}$.
We know that there exist tuples $\gamma_1',\ldots,\gamma_s'\in D_{A,m}$ and a term operation $f$
such that $f(\gamma_1',\ldots,\gamma_s') = \alpha'$.

Since $n>m\ge k$, 
WLOG we can assume that 
$\{a_1,\ldots,a_{2m}\} = \{a_1,\ldots,a_{2n}\}$.
Obviously, any extension of $\gamma_{i}$ with $2n-2m$ elements is a tuple from $D_{A,n}$.
Then, we can find extensions $\gamma_1,\ldots,\gamma_{s}$ such that 
$f(\gamma_{1},\ldots,\gamma_s) = \alpha$.
Hence, $\alpha\in \langle D_{A,n}\rangle_{\mathbb A}$.
\end{proof}

Now, let us prove Theorem~\ref{trivialCriteria}.
\begin{proof}
Suppose $\mathbb A$ is not switchable. Then by Lemma~\ref{lem1} for every $r\ge 2k$
there exists a nice relation $\sigma$ of arity $m\ge r$ in $\Inv(\mathbb A)$.

We want $m$ to be even. If it is odd we do the following.
Suppose $(a_1,\ldots,a_{m})\notin \sigma$,
then some element of $A$ occurs at least twice in the sequence
$a_1,a_3,a_5,\ldots,a_{m}$. Assume that $a_{2j+1} = a_{2i+1}$, where $j<i$.
Then we identify the $2j+1$-th variable and the $2i+1$-th variable of $\sigma$
to get a relation $\sigma'$ of arity $m' = m-1$.
$$\sigma'(x_1,\ldots,x_{2i},x_{2i+2},\ldots,x_{m}) =
\sigma(x_1,\ldots,x_{2i},x_{2j+1},x_{2i+2},\ldots,x_{m}).$$
If $m$ is even, we put $\sigma' = \sigma$ and $m' = m$.
We can check that $D_{A,m'/2}\subseteq \sigma'$,
therefore $\langle D_{A,m'/2}\rangle_{\mathbb A}\neq A^{m'}$.
Using Lemma~\ref{LtrivialCriteria}, we prove that
$\langle D_{A,n}\rangle_{\mathbb A}\neq A^{2n}$
for every $n\ge k$.

Suppose $\langle D_{A,m}\rangle_{\mathbb A}\neq A^{2m}$
for every $m\ge k$.
Then for every $m\ge k$ we can define a relation $\sigma_{m} = \langle D_{A,m}\rangle_{\mathbb A}$, which has all properties of
a nice relation we use in Lemma~\ref{lem2}. Thus, arguing as in Theorem~\ref{MainThm} we can prove that
$\mathbb A$ has EGP property.
\end{proof}
\section{Criteria For the Idempotent Case}

\begin{lem}\label{findBlocker}
Suppose $\mathbb A$ is a finite algebra,
$\varnothing \neq B\subsetneq A$, 
$\langle A^{n}\setminus (A\setminus B)^{n}\rangle_{\mathbb A} \neq A^{n}$ for every $n$,
then there exists $C\subsetneq A$
such that $B\subseteq C$ and 
$A^{n}\setminus (A\setminus C)^{n}\in\Inv(\mathbb A)$ for every $n$. 
\end{lem}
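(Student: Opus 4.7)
The plan is to first reformulate the desired invariance as a local ``projective absorption'' condition on $C$, and then obtain $C$ as a maximal element of a natural family, deriving the local condition from maximality by contradiction.

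First, I would verify that $A^n\setminus (A\setminus C)^n\in \Inv(\mathbb A)$ for every $n$ is equivalent to the following: for every operation $f$ of $\mathbb A$ of arity $k$, there exists $i\in\{1,\dots,k\}$ with $f(A^{i-1},C,A^{k-i})\subseteq C$. The ``if'' direction is immediate: applying $f$ coordinatewise to tuples that each carry a $C$-coordinate, the absorbing position converts the $C$-coordinate of the $i$-th input into a $C$-coordinate of the image. For the ``only if'' direction, suppose some $f$ of arity $k$ has no absorbing position. For each $i$, pick a witness $(a_1^i,\dots,a_k^i)$ with $a_i^i\in C$ but $f(a_1^i,\dots,a_k^i)\notin C$, and assemble them into tuples $\vec{b}_j\in A^k$ by $\vec{b}_j[i]:=a_j^i$. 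Each $\vec{b}_j$ has $j$-th coordinate $a_j^j\in C$, so $\vec{b}_j\in A^k\setminus (A\setminus C)^k$, yet $f(\vec{b}_1,\dots,\vec{b}_k)$ has every coordinate outside $C$, violating invariance at $n=k$.

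Second, consider the family $\mathcal{F}:=\{W\subseteq A: B\subseteq W,\ W\subsetneq A,\ \langle A^n\setminus (A\setminus W)^n\rangle_{\mathbb A}\ne A^n \text{ for every } n\}$. By the hypothesis $B\in\mathcal{F}$, so $\mathcal{F}$ is non-empty, and since $A$ is finite I choose $C$ maximal in $\mathcal{F}$ under inclusion. I would then verify the projective absorption property of $C$ by contradiction. If it failed, the matrix construction above would produce $\tau\in (A\setminus C)^k\cap \langle A^k\setminus(A\setminus C)^k\rangle_{\mathbb A}$ for some $k$; setting $C^+:=C\cup\{\tau_1,\dots,\tau_k\}\supsetneq C$, maximality forces $C^+\notin\mathcal F$, so either $C^+=A$ or $\langle A^m\setminus(A\setminus C^+)^m\rangle_{\mathbb A}=A^m$ for some $m$. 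In the latter case it suffices to show $A^m\setminus(A\setminus C^+)^m\subseteq\langle A^m\setminus(A\setminus C)^m\rangle_{\mathbb A}$, for then $A^m\subseteq\langle A^m\setminus(A\setminus C)^m\rangle_{\mathbb A}$, contradicting $C\in\mathcal{F}$. To establish this containment, I would show that each tuple with a coordinate in $C^+\setminus C$ (necessarily some $\tau_j$) can be generated by lifting the witnessing term for $\tau_j$ from length $k$ to length $m$: reuse the original generators at the relevant coordinate, and pad the remaining coordinates with constant-$c$ inputs for $c\in C$, exploiting idempotency (inherited from the section's context) both to match the desired outputs and to preserve membership of each generator in $A^m\setminus(A\setminus C)^m$.

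The principal obstacle will be this lifting step. Producing an arbitrary tuple in $A^m\setminus(A\setminus C^+)^m$ from length-$m$ generators in $A^m\setminus(A\setminus C)^m$ simultaneously requires the generators to each retain a coordinate in $C$ and to reproduce the correct target value at every coordinate; idempotency is what I expect to make the padding argument go through. The case $C^+=A$ should be handled analogously, using permutations of $\tau$ together with constant padding to exhibit enough new tuples in $\langle A^n\setminus(A\setminus C)^n\rangle_{\mathbb A}$ to contradict $C\in\mathcal F$.
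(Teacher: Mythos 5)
Your opening reduction --- that $A^{n}\setminus (A\setminus C)^{n}\in\Inv(\mathbb A)$ for all $n$ is equivalent to every operation having a $C$-absorbing coordinate, proved via the matrix of witnesses --- is correct, and your maximal choice of $C$ is exactly the paper's starting point. The gap is in the contradiction step. From a failure of absorption you obtain $\tau\in(A\setminus C)^{k}\cap\langle A^{k}\setminus(A\setminus C)^{k}\rangle_{\mathbb A}$, you enlarge $C$ to $C^{+}=C\cup\{\tau_1,\ldots,\tau_k\}$, and you need $A^{m}\setminus(A\setminus C^{+})^{m}\subseteq\langle A^{m}\setminus(A\setminus C)^{m}\rangle_{\mathbb A}$. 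The obstacle you yourself flag is real and idempotency does not remove it: to hit a target $\mu$ whose only coordinate in $C^{+}$ is a single occurrence of $\tau_j$, with all other coordinates outside $C$, idempotency forces the padded coordinates of each input to equal the corresponding (non-$C$) coordinates of $\mu$, so the inputs $(a_l^{j},\mu_2,\ldots,\mu_m)$ for $l\neq j$ have no coordinate in $C$ and are not in your generating set. What does follow from $\tau\in\sigma_k:=\langle A^{k}\setminus(A\setminus C)^{k}\rangle_{\mathbb A}$ by padding and total symmetry is only that tuples containing \emph{all} of $\tau_1,\ldots,\tau_k$ as a sub-tuple lie in $\sigma_{k+s}$; that is far from all of $A^{m}\setminus(A\setminus C^{+})^{m}$, and there is no apparent route to the full containment. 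The case $C^{+}=A$ suffers from the same problem.

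The paper avoids this by not trying to trap the enlarged generating set inside $\sigma_n$ itself, and by adding only one carefully chosen element rather than all coordinates of the bad tuple. Given $(a_1,\ldots,a_n)\in\sigma_n\cap(A\setminus C)^{n}$, it takes the maximal $m$ such that $(a_1,\ldots,a_m)\times A^{s}\not\subseteq\sigma_{m+s}$ for every $s$, and then works with the constant-substituted slice $\delta_{s+1}(x_1,\ldots,x_{s+1})=\sigma_{m+s+1}(a_1,\ldots,a_m,x_1,\ldots,x_{s+1})$. Idempotency makes $\delta_{s+1}$ invariant, the choice of $m$ makes it non-full, and the symmetry of $\sigma$ together with $(a_1,\ldots,a_{m+1})\times A^{s}\subseteq\sigma_{m+s+1}$ makes $\delta_{s+1}$ contain every tuple with a coordinate in $C'=C\cup\{a_{m+1}\}$. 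Hence $\langle A^{n}\setminus(A\setminus C')^{n}\rangle_{\mathbb A}\subseteq\delta_{n}\neq A^{n}$, contradicting maximality --- the blocker for $C'$ is a \emph{different} invariant relation, not $\sigma_n$. To repair your proof you would need to replace the containment claim by some such slicing argument; as written, the decisive step is missing.
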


\begin{proof}
Let $C\subsetneq A$ be a maximal set containing $B$ such that 
$\langle A^{n}\setminus (A\setminus C)^{n}\rangle_{\mathbb A} \neq A^{n}$ for every $n$.
Put $\sigma_n = \langle A^{n}\setminus (A\setminus C)^{n}\rangle_{\mathbb A}$, let us show that 
$\sigma_{n}= A^{n}\setminus (A\setminus C)^{n}$ for every $n$.
Assume the converse. 
Then there exists 
$(a_1,\ldots,a_{n})\in \sigma_n \cap (A\setminus C)^{n}$.
Since the algebra is idempotent, 
$(a_1,\ldots,a_{n})\times A^{s} \subseteq \sigma_{n+s}$ for every $s\ge 0$.
Let $m\in\{0,1,\ldots,n-1\}$
be the maximal number such that 
$(a_1,\ldots,a_{m})\times A^{s} \not\subseteq \sigma_{m+s}$ for every $s\ge 0$.
Then for some $s'$
we have $(a_1,\ldots,a_{m+1})\times A^{s'}\subseteq \sigma_{m+s'+1}$.
Since the algebra is idempotent, 
$(a_1,\ldots,a_{m+1})\times A^{s}\subseteq \sigma_{m+s+1}$
for every $s\ge s'$.
Put $$\delta_{s+1}(x_1,\ldots,x_{s+1})= \sigma_{m+s+1}(a_1,\ldots,a_{m},x_1,\ldots,x_{s+1}).$$
By the definition of
$m$ we know that $\delta_{s+1}$ is not a full relation.
Since $\sigma_{m+s+1}$ is symmetric, 
$A^{s+1}\setminus (A\setminus \{a_{m+1}\})^{s+1}\subseteq \delta_{s+1}$.
Put 
$C' = C\cup \{a_{m+1}\}$.
By the definition
we have 
$\delta_{s}\in\Inv(\mathbb A)$ and 
$A^{s+1}\setminus (A\setminus C')^{s+1}\subseteq \delta_{s+1}$.
Therefore, 
$\langle A^{n}\setminus (A\setminus C')^{n}\rangle_{\mathbb A} \subseteq
\delta_{n}\neq A^{n}$ for every $n\ge s'+1$.
It is easy to check that the above condition holds for $n< s'+1$.
This contradicts our assumption about the maximality of $C$.
\end{proof}

\begin{lem}\label{idempotent1step}
Suppose $\mathbb A$ is a finite idempotent algebra satisfying EGP property.
Then there exists a binary reflexive relation $\rho$ 
such that $\rho$ is not full and
the relation $\sigma_{n}$ defined by 
$$\sigma_{n}(x_1,\ldots,x_{2n}) = 
\rho(x_1,x_2)\vee\rho(x_3,x_4)\vee\dots\vee\rho(x_{2n-1},x_{2n})$$
is an invariant for $\mathbb A$ for every $n$.
\end{lem}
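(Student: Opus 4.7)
The plan is to apply Lemma~\ref{findBlocker} to the product algebra $\mathbb A^2$, whose universe is $A^2$ and whose term operations act on pairs coordinatewise, taking $B$ to be the diagonal $\Delta=\{(a,a):a\in A\}$. Under the identification of $(A^2)^n$ with $A^{2n}$ by interleaving coordinates, the set $(A^2)^n\setminus(A^2\setminus\Delta)^n$ is exactly $D_{A,n}$, since a tuple of pairs avoids being ``entirely off-diagonal'' iff at least one pair coincides. Moreover, an invariant relation of $\mathbb A^2$ becomes, under the same identification, an invariant relation of $\mathbb A$. Hence, if Lemma~\ref{findBlocker} delivers $C\subsetneq A^2$ with $\Delta\subseteq C$ such that $(A^2)^n\setminus(A^2\setminus C)^n\in\Inv(\mathbb A^2)$ for every $n$, then $\rho:=C$ is a reflexive (since $\Delta\subseteq C$) and non-full (since $C\subsetneq A^2$) binary relation on $A$, and the relation above unfolds precisely to $\rho(x_1,x_2)\vee\cdots\vee\rho(x_{2n-1},x_{2n})=\sigma_n(x_1,\ldots,x_{2n})$.

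The work therefore lies in verifying the hypothesis of Lemma~\ref{findBlocker}, namely $\langle D_{A,n}\rangle_{\mathbb A}\ne A^{2n}$ for \emph{every} $n\ge 1$. For $n\ge k$ this follows from Theorem~\ref{trivialCriteria} together with the EGP assumption. For $n<k$ I would argue by contradiction: if $\langle D_{A,n_0}\rangle_{\mathbb A}=A^{2n_0}$ for some $n_0<k$, then for any $m\ge n_0$ and any $\alpha=(\alpha',\bar c)\in A^{2m}$ with $\alpha'\in A^{2n_0}$, write $\alpha'=f(\gamma_1',\ldots,\gamma_s')$ with $\gamma_i'\in D_{A,n_0}$, and append the common suffix $\bar c$ to each $\gamma_i'$ to form $\gamma_i\in A^{2m}$. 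Each $\gamma_i$ still lies in $D_{A,m}$ because the coinciding pair of $\gamma_i'$ sits inside the first $2n_0$ coordinates, while idempotency gives $f(\gamma_1,\ldots,\gamma_s)=(\alpha',\bar c)=\alpha$. Thus $\langle D_{A,m}\rangle_{\mathbb A}=A^{2m}$ for every $m\ge n_0$, contradicting the case $m\ge k$. This is essentially Lemma~\ref{LtrivialCriteria}, but the idempotency lets us drop its restriction $m\ge k$ because we no longer need all of $A$ to appear among the first $2n_0$ coordinates.

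The main obstacle is exactly this uniform-in-$n$ verification, particularly for small $n<k$; once it is in hand, Lemma~\ref{findBlocker} applied to $\mathbb A^2$ immediately produces the desired $\rho$, and rewriting the conclusion in $A^{2n}$-coordinates finishes the proof.
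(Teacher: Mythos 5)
Your proposal is correct and follows essentially the same route as the paper: pass to $\mathbb A\times\mathbb A$, take $B$ to be the diagonal, and apply Lemma~\ref{findBlocker}, reading the resulting $C$ as the binary relation $\rho$. Your extra step checking the hypothesis of Lemma~\ref{findBlocker} for all $n$ (including $n<k$, via idempotency) is a sound and welcome filling-in of a detail the paper's two-line proof leaves implicit.
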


\begin{proof}

Let us consider an algebra $\mathbb D = \mathbb A\times\mathbb A$.
Put $B = \{(a,a)\mid a\in A\}$.
By Theorem~\ref{trivialCriteria}, 
$\langle D^{n}\setminus (D\setminus B)^{n}\rangle_{\mathbb A}\neq D^{n}$
for every $n$.
By Lemma~\ref{findBlocker}, there exists 
$C\subseteq D = A\times A$ such 
that $D^{n}\setminus (D\setminus C)^{n}\in\Inv(\mathbb D)$ for every $n$.
It remains to put $\rho = C$.
\end{proof}

\begin{lem}\label{IdempotentCriteriaL1}
Suppose $\mathbb A$ is a finite idempotent algebra satisfying EGP property.
Then there exist $\alpha,\beta\subsetneq A$
such that $\alpha\cup\beta = A$, $\rho = (\alpha\times\alpha)\cup(\beta\times\beta)$,
and
the relation $\sigma_{n}$ defined by
$$\sigma_{n}(x_1,\ldots,x_{2n}) =
\rho(x_1,x_2)\vee\rho(x_3,x_4)\vee\dots\vee\rho(x_{2n-1},x_{2n})$$
is an invariant for $\mathbb A$ for every $n$.
\end{lem}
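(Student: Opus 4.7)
The plan is to use the binary reflexive non-full relation $\rho$ produced by Lemma~\ref{idempotent1step} --- whose disjunctive sigma-versions $\sigma_{n}$ are already invariants of $\mathbb{A}$ --- and from it extract two proper subsets $\alpha,\beta\subsetneq A$ with $\alpha\cup\beta=A$ for which $\rho':=(\alpha\times\alpha)\cup(\beta\times\beta)$ also has the disjunctive invariance.

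Since $\mathbb{A}$ is idempotent, every singleton $\{c\}\subseteq A$ is a subuniverse, and substituting a constant into an invariant relation yields an invariant relation. Consequently, for each $c\in A$ the column $\alpha(c):=\{x:(x,c)\in\rho\}$ is a subuniverse of $\mathbb{A}$, and the non-fullness of $\rho$ forces some column to be proper. Choosing a forbidden pair $(a_{0},b_{0})\notin\rho$ (necessarily with $a_{0}\neq b_{0}$) provides a pair of proper subuniverses $\alpha(a_{0})\not\ni b_{0}$ and $\alpha(b_{0})\not\ni a_{0}$. The main step is to arrange $\alpha(a_{0})\cup\alpha(b_{0})=A$. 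A priori this covering may fail; in that case one exploits the disjunctive invariance of $\rho$ --- concretely, the invariance of the binary relation $\sigma_{2}(x_{1},a_{0},x_{2},b_{0})$, whose complement is the rectangle $(A\setminus\alpha(a_{0}))\times(A\setminus\alpha(b_{0}))$ --- and re-invokes Lemma~\ref{findBlocker} inside the product algebra $\mathbb{A}\times\mathbb{A}$ with starting set $B$ enlarged from the diagonal to include the extra forbidden pairs emerging from any element outside the current cover. Iterating this refinement (which must terminate since $A$ is finite) yields a reflexive non-full relation $\tilde{\rho}\supseteq\rho$ with the same disjunctive invariance and whose forbidden-pair columns do cover $A$. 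Set $\alpha:=\alpha(a_{0})$ and $\beta:=\alpha(b_{0})$ for a forbidden pair of $\tilde{\rho}$.

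With $\alpha\cup\beta=A$ secured, define $\rho':=(\alpha\times\alpha)\cup(\beta\times\beta)$. The invariance of $\sigma_{n}(\rho')$ for every $n$ follows by a final appeal to the Lemma~\ref{findBlocker}-style reasoning in $\mathbb{A}\times\mathbb{A}$: the complement of $\rho'$ is the rectangle $(A\setminus\alpha)\times(A\setminus\beta)\cup(A\setminus\beta)\times(A\setminus\alpha)$ with disjoint sides, whose $n$-th product is recognised as the complement of a subuniverse through the chaining of the already-established invariants $\sigma_{m}(\tilde{\rho})$ with substitutions from the invariant sets $\alpha,\beta$.

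\emph{Main obstacle.} The central difficulty is the covering step $\alpha\cup\beta=A$: a single choice of columns in $\rho$ need not cover $A$, and the iterative enlargement demands a delicate second round of the Lemma~\ref{findBlocker} machinery inside $\mathbb{A}\times\mathbb{A}$, parallel to the way Lemma~\ref{idempotent1step} itself was obtained. The final invariance verification, while technical, is routine once the covering is in hand.
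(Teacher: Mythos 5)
There is a genuine gap. Your proposal correctly begins from Lemma~\ref{idempotent1step} and correctly observes that columns of $\rho$ are subuniverses, but the two steps you yourself flag as the crux --- forcing $\alpha\cup\beta=A$ and then proving that $\sigma_n\bigl((\alpha\times\alpha)\cup(\beta\times\beta)\bigr)$ is invariant for every $n$ --- are not actually carried out. The proposed fix for the covering step, ``re-invoke Lemma~\ref{findBlocker} in $\mathbb A\times\mathbb A$ with $B$ enlarged by the extra forbidden pairs and iterate,'' does not work as stated: Lemma~\ref{findBlocker} requires the hypothesis $\langle D^{n}\setminus(D\setminus B)^{n}\rangle\neq D^{n}$ for \emph{every} $n$ for the enlarged $B$, and you give no reason why this holds once $B$ is no longer the diagonal. (There is also a small slip: without first symmetrizing $\rho$, $(a_0,b_0)\notin\rho$ does not give $b_0\notin\alpha(a_0)$.) More fundamentally, even granted a cover, nothing in the proposal explains why the specific relation $(\alpha\times\alpha)\cup(\beta\times\beta)$ built from two columns inherits the disjunctive invariance; the final paragraph asserts this ``through chaining of the already-established invariants'' without an argument.

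The paper takes a different and essentially unavoidable route: it first symmetrizes $\rho_0$ (by intersecting the $2^{n}$ relations obtained by swapping $\rho_0$ with its converse in each disjunct), then takes $\rho_1$ \emph{maximal} among symmetric reflexive relations with the disjunctive invariance property, and shows via the definable enlargement $\rho_2=(\rho_1(x_1,b)\vee\rho_1(x_1,x_2))\wedge(\rho_1(x_2,b)\vee\rho_1(x_1,x_2))$ that the restriction of $\rho_1$ to the elements not related to everything is transitive, hence an equivalence relation, and that it has exactly two classes. The two-block form $(\alpha\times\alpha)\cup(\beta\times\beta)$ then comes for free with the invariance already attached, since $\rho_1$ itself is the relation whose $\sigma_n$ are invariant. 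This maximality-plus-transitivity argument is the real content of the lemma, and it is the piece missing from your proposal.
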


\begin{proof}
By Lemma~\ref{idempotent1step} there exists a reflexive relation $\rho_0$ such that 
the relation $\sigma_{n,0}(x_1,\ldots,x_{2n}) = \rho_0(x_1,x_2)\vee\rho_0(x_3,x_4)\vee\dots\vee\rho_0(x_{2n-1},x_{2n})$ is 
an invariant for every $n$.

First, we want $\rho_{0}$ to be a symmetric relation.
Put $\rho_{0}'(x,y) = \rho_{0}(y,x)$.
We consider $2^{n}$ relations that can be obtained 
from $\sigma_{n,0}$ by replacing some $\rho_{0}$ by $\rho_{0}'$ in the definition of $\sigma_{n,0}$.
The intersection of these relations we denote by
$\sigma_{n,1}$.
We can easily check that $\sigma_{n,1}$ is an invariant for $\mathbb A$ and 
$\sigma_{n,1} (x_1,\ldots,x_{2n})= \rho_1(x_1,x_2)\vee\rho_1(x_3,x_4)\vee\dots\vee\rho_1(x_{2n-1},x_{2n})$,
where $\rho_{1} = \rho_{0}\cap\rho_{0}'$.

Assume that 
$\rho_{1}$ is a maximal symmetric reflexive relation such that the above relation $\sigma_{n,1}$ is an invariant for every $n$.
Let $C\subseteq A$ be the set of all elements $c$ such that 
$\{c\}\times A\subseteq \rho_{1}$.
Put $B = A\setminus C$.
Let us consider the relation $\rho_{1}' = \rho_{1}\cap (B\times B)$.
Assume that $\rho_{1}'$ is not transitive, then for some $a,b,c\in A$ 
we have $(a,b),(b,c)\in \rho_{1}'$ and $(a,c)\notin \rho_{1}'$.

Put $\rho_2 =  
(\rho_{1}(x_1,b)\vee
\rho_{1}(x_1,x_2))
\wedge
(\rho_{1}(x_2,b)\vee
\rho_{1}(x_1,x_2))$ 
and 
$$\sigma_{n,2}(x_1,\ldots,x_{2n}) = 
\rho_2(x_1,x_2)\vee\rho_2(x_3,x_4)\vee\dots\vee\rho_2(x_{2n-1},x_{2n}).
$$
It can be easily checked that 
$\sigma_{n,2}\in\Inv(\mathbb A)$.
Since $b\not\in C$, there exists $d\in B$ such that 
$(b,d)\notin \rho_1$.
Obviously, $(b,d)\notin \rho_{2}$, hence $\rho_{2}$ is not full.
Also, $\rho_{2}$ is a symmetric reflexive relation such that $\rho_{1}\subsetneq \rho_{2}$.
This contradicts our assumption about the maximality of $\rho_{1}$. 
Therefore, $\rho_{1}'$ is transitive, and therefore, it is an equivalence relation on $B$.
Assume that there are at least 3 different equivalence classes.
We choose some element $b\in B$ and then define $\rho_2$ as above.
We can easily see that $\rho_{2}$ is not full, which contradicts the maximality of $\rho_1$.
Hence there are exactly 2 equivalence classes. This completes the proof.
\end{proof}

\begin{lem}\label{IdempotentCriteriaL2}
Suppose $\alpha,\beta\subsetneq A$, $\alpha\cup\beta = A$, $\rho = (\alpha \times \alpha) \cup (\beta\times\beta)$,
the relation $\sigma_{n}$ is defined by
$$\sigma_{n}(x_1,\ldots,x_{2n}) =
\rho(x_1,x_2)\vee\rho(x_3,x_4)\vee\dots\vee\rho(x_{2n-1},x_{2n}).$$
Then an idempotent operation $f$ is $\alpha\beta$-projective if and only if
it preserves $\sigma_{n}$ for every $n$.
\end{lem}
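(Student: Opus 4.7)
The plan is to prove the two implications separately, with the forward direction being an immediate unfolding of definitions and the reverse direction requiring one concrete construction.

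For the forward direction, suppose $f$ of arity $t$ is $\alpha\beta$-projective via coordinate $j$. Given any $t$ tuples $u^{(1)},\ldots,u^{(t)} \in \sigma_n$, I would focus on $u^{(j)}$: since $u^{(j)} \in \sigma_n$, some pair $(u^{(j)}_{2k-1},u^{(j)}_{2k})$ lies in $\rho$, so both entries lie in a common $S\in\{\alpha,\beta\}$. Projectivity at coordinate $j$ then forces $f$ to send the entire $(2k-1)$-th column into $S$ and the entire $(2k)$-th column into $S$. Hence the $k$-th output pair lies in $S\times S\subseteq\rho$, and the image tuple is in $\sigma_n$.

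For the reverse direction, I would argue by contrapositive: if $f$ of arity $t$ is not $\alpha\beta$-projective, I will produce $t$ tuples in $\sigma_t$ whose coordinatewise image escapes $\sigma_t$. For each $j\in\{1,\ldots,t\}$, failure of projectivity at $j$ supplies $S_j\in\{\alpha,\beta\}$ and a witness $(c^{(j)}_1,\ldots,c^{(j)}_t)$ with $c^{(j)}_j\in S_j$ and $d^{(j)}:=f(c^{(j)}_1,\ldots,c^{(j)}_t)\in A\setminus S_j$. Because $\alpha,\beta\subsetneq A$ and $\alpha\cup\beta=A$, the sets $\alpha\setminus\beta=A\setminus\beta$ and $\beta\setminus\alpha=A\setminus\alpha$ are nonempty; so for each $k$ I can fix an auxiliary element $e_k\in S_k\setminus\bar S_k$, where $\bar S_k$ denotes the other of $\{\alpha,\beta\}$.

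Next, I would define the $i$-th input tuple $a^{(i)}\in A^{2t}$ by placing $c^{(k)}_i$ in position $2k-1$ and $e_k$ in position $2k$ for every $k\in\{1,\ldots,t\}$. Membership $a^{(i)}\in\sigma_t$ is secured by the pair at $k=i$, namely $(c^{(i)}_i,e_i)\in S_i\times S_i\subseteq\rho$. Taking the coordinatewise image of these $t$ tuples under $f$: position $2k-1$ becomes $d^{(k)}\in A\setminus S_k\subseteq\bar S_k\setminus S_k$, and position $2k$ becomes $f(e_k,\ldots,e_k)=e_k\in S_k\setminus\bar S_k$ by idempotency. A short case check on $S_k\in\{\alpha,\beta\}$ shows the $k$-th output pair lies in neither $\alpha\times\alpha$ nor $\beta\times\beta$, so the image is not in $\sigma_t$, contradicting preservation.

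The main subtlety will be the careful choice of the auxiliary elements $e_k$: they must sit in the proper difference $S_k\setminus\bar S_k$ rather than merely in $S_k$, since an $e_k\in\alpha\cap\beta$ would accidentally land the output pair back in $\rho$. This is precisely the step where the hypothesis that $\alpha$ and $\beta$ are both proper and cover $A$ is used; the rest of the construction is mechanical.
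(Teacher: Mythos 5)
Your proof is correct and follows essentially the same route as the paper's: the forward direction is the same unfolding of the definition at the projective coordinate $j$, and your reverse construction (interleaving the witness tuples $c^{(k)}$ at odd positions with constant auxiliary elements $e_k\in S_k\setminus\bar S_k$ at even positions) is exactly the paper's matrix argument, including the key observation that the $e_k$ must be chosen in the proper set difference. No gaps.
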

\begin{proof}
Suppose $f$ is $\alpha\beta$-projective.
Let $\gamma_1,\ldots,\gamma_{s}\in \sigma_n$.
We need to prove that $f(\gamma_1,\ldots,\gamma_s)=\delta\in \sigma_n$.
Choose $j$ in the definition of $\alpha\beta$-projectiveness.
Since $\gamma_j\in\sigma_n$ we have 
$(\gamma_{j}(2r-1),\gamma_{j}(2r))\in\rho$ for some $r$.
Hence $\gamma_{j}(2r-1),\gamma_{j}(2r)\in \alpha$ or
$\gamma_{j}(2r-1),\gamma_{j}(2r)\in \beta$.
By the definition of $\alpha\beta$-projectiveness, 
we obtain 
$\delta(2r-1),\delta(2r)\in \alpha$ or
$\delta(2r-1),\delta(2r)\in \beta$. Hence $\delta\in\sigma_{n}$.

Suppose $f$ of arity $s$ preserves $\sigma_{n}$ for every $n$.
Assume that $f$ is not $\alpha\beta$-projective.
Hence for every $j\in\{1,2,\ldots,s\}$
there exists a tuple 
$(a_1^{(j)},\ldots,a_s^{(j)})$ and $S_{j}\in\{\alpha,\beta\}$ such that 
$a_{j}^{(j)}\in S_{j}$ and 
$f(a_1^{(j)},\ldots,a_n^{(j)})\notin S_{j}$.
Choose $S_{j}'$ such that $\{S_{j},S_{j}'\} = \{\alpha,\beta\}$,
and $c_{j}\in S_{j}\setminus S_{j}'$.
The following statement gives a contradiction and completes the proof
$$
f\left(\begin{matrix}
a_1^{(1)}& a_2^{(1)}& \dots & a_s^{(1)}\\
c_1 & c_1 & \dots & c_1\\
a_1^{(2)}& a_2^{(2)}& \dots & a_s^{(2)}\\
c_2 & c_2 & \dots & c_2\\
\vdots & \vdots & \ddots & \vdots\\
a_1^{(s)}& a_2^{(s)}& \dots & a_s^{(s)}\\
c_s & c_s & \dots & c_s
\end{matrix}\right)\notin \sigma_{s}.
$$
\end{proof}

Theorem~\ref{IdempotentCriteria} follows from 
Theorem~\ref{trivialCriteria}, Lemma~\ref{IdempotentCriteriaL1}, and Lemma~\ref{IdempotentCriteriaL2}.

%




\end{document}